\documentclass[12pt,a4paper,reqno]{amsart}
\usepackage{amsfonts,amsthm,amsmath,amscd}
\usepackage[utf8]{inputenc}
\usepackage{t1enc}
\usepackage[mathscr]{eucal}
\usepackage{indentfirst}
\usepackage{graphicx,graphics,pict2e}
\usepackage{epic}
\usepackage{lipsum}
\usepackage{stmaryrd}
\usepackage{authblk}
\usepackage{subcaption}
\usepackage[symbol]{footmisc}
\numberwithin{equation}{section}
\usepackage[margin=2.9cm]{geometry}
\usepackage{epstopdf}
\RequirePackage{doi}
\usepackage{hyperref}
\allowdisplaybreaks
\usepackage{cite}
\usepackage{verbatim,amssymb,amsfonts}
\usepackage{mathrsfs}
\usepackage{mathtools}
\usepackage{tikz-cd}
\usepackage{indentfirst}
\usepackage{slashed}
\usepackage{thmtools}
\usepackage{setspace}
\usepackage{enumerate}
\usepackage{accents}

\theoremstyle{plain}
\newtheorem{theorem}{Theorem}[section]
\newtheorem{lemma}[theorem]{Lemma}

\theoremstyle{definition}
\newtheorem{definition}[theorem]{Definition}
\newtheorem{remark}[theorem]{Remark}

\DeclarePairedDelimiterX{\inp}[2]{\langle}{\rangle}{#1, #2}

\renewcommand{\thefootnote}{\fnsymbol{footnote}}

\makeatletter
\g@addto@macro{\endabstract}{\@setabstract}
\newcommand{\authorfootnotes}{\renewcommand\thefootnote{\@fnsymbol\c@footnote}}%
\makeatother

\makeatletter
\@namedef{subjclassname@2020}{%
  \textup{2020} Mathematics Subject Classification}
\makeatother

\title[]{Some New Results on the Seidel Energy of Graphs with Self-Loops}

\begin{document}

\begin{center}
    \vspace{-1cm}
	\maketitle

	\normalsize
    \authorfootnotes
    Kalpesh M. Popat\textsuperscript{1},
    Irena M. Jovanovi\' c \textsuperscript{2},
	\par \smallskip

\textsuperscript{1}
        \small{Saurashtra University, Rajkot, Gujarat, India}

       \textsuperscript{2}
        \small{School of Computing, Union University, Belgrade, Serbia} \par \bigskip

\end{center}

\address{Saurashtra University, Rajkot, Gujarat, India}
\email{kalpeshmpopat@gmail.com}

\address{School of Computing, Union University, Belgrade, Serbia}
\email{irenaire@gmail.com}


\begin{abstract}
Harshitha et al. recently introduced Seidel energy of graphs with self-loops. In this paper, we extend some of their results by giving a necessary and sufficient condition for the Seidel energy of a looped graph to be equal to the Seidel energy of its underlying graph. We also consider Seidel energy of the union of certain graphs, and show that graph operations complement and Seidel switching preserve Seidel energy in the looped setting.    

\vspace{1.0em}
\noindent \textbf{Key words:} self-loop graph, Seidel matrix, Seidel energy, Seidel switching, complemet, union.   

\smallskip
\noindent \textbf{2020 Mathematics Subject Classification:} 05C50

\end{abstract}

\section{Introduction}

Let $G=(V(G),E(G))$ be a finite simple graph with the vertex set $V(G)$ and the edge set $E(G)$. In case $|V(G)|=n$ and $|E(G)|=m$, we say that $G$ is the graph of order $n$ and size $m$. 

Let $W\subseteq V(G)$ and $|W|=\sigma$, $0\leq\sigma\leq n$, and let us denote by $G_W$ the \emph{self-loop graph} of $G$ at $W$. More precisely, $G_W$ is the graph of order $n$ obtained from $G$ by attaching a self-loop at each vertex from the set $W$. In the literature so far, there are
some studies of the spectrum and related properties of graphs with self-loops, see, for example, \cite{cesar}, \cite{leslie}, \cite{gutman2022}, \cite{jovanovic2023}, \cite{zagrebloop}, \cite{loopbound1}. In this paper, we are interested in spectral properties of these graphs connected to their energy with respect to the Seidel matrix.

The \emph{Seidel matrix} $S=S(G)=[s_{ij}]_{n\times n}$ of the graph $G$ with $n$ vertices, originally introduced in \cite{seidel}, is the real symmetric square matrix of order $n$ whose $(i,j)$-th entry $s_{ij}$ is equal to $-1$, if the corresponding vertices $v_i$ and $v_j$, $v_i,v_j\in V(G)$ and $i\neq j$, are adjacent in $G$, to $1$, if vertices $v_i$ and $v_j$ are non-adjacent in $G$, and $s_{ii}=0$, for all $i=1,2,\ldots, n$. Graphs with self-loops with respect to the Seidel matrix have been studied in \cite{harshitha2024}, where the following definition for the Seidel matrix $S(G_W)$ of the self-loop graph $G_W$ of the given graph $G$ has been used:
$$S(G_W)=S(G)-\mathfrak{I}_W.$$ 
Here, $\mathfrak{I}_W$ is the diagonal matrix of order $n$ with exactly $\sigma$ ones on the main diagonal corresponding to $W$ and all other entries equal to zero. The eigenvalues $\theta_1(G)\geq\theta_2(G)\geq\cdots\geq\theta_n(G)$ ($\theta_1(G_W)\geq\theta_2(G_W)\geq\cdots\geq\theta_n(G_W)$) of $S(G)$ ($S(G_W)$) are the \emph{eigenvalues} of $G$ ($G_W$). The multiset of the eigenvalues of $G$ ($G_W$) form the \emph{spectrum}, in the notation $spec(G)$ ($spec(G_W)$), of $G$ ($G_W$). We say that (self-loop) graphs of the same order are \emph{cospectral} if their spectra coincide.

The study of the graph energy, originally with respect to the adjacency matrix, was initiated by Ivan Gutman in the 1970s \cite{Gutman}. Nowadays, it is one of the most intensively studied graph invariants \cite{li2012}. Traditionally, the theory of the graph energy is focused on simple graphs and the adjacency matrix. But, in recent years, the classical energy concepts are extended to graphs with self-loops and various graph matrices \cite{AkbJovLim}, \cite{sloop3}, \cite{anchan2023}, \cite{popat2023}, \cite{popat2024}, \cite{preetha2023}. In this paper, we consider the energy of (self-loop) graphs regarding the Seidel matrix. 

The \emph{Seidel energy} $\mathcal{SE}(G)$ of the graph $G$ is defined in \cite{haemers} as the sum of the absolute values of the eigenvalues of the Seidel matrix $S(G)$ of the graph $G$, i.e. $$\mathcal{SE}(G)=\sum\limits_{i=1}^{n} |\theta_i(G)|.$$ 
The Seidel energy was studied intensively for loop-less graphs \cite{akbari2020}, \cite{einollahzadeh2022}, \cite{oboudi2016}, \cite{oboudi2023}, \cite{popat2019}. 

The \emph{Seidel energy} of the self-loop graph $G_W$ was introduced and considered in \cite{harshitha2024} by the following definition:
$$\mathcal{SE}(G_W)=\sum\limits_{i=1}^{n} \left|\theta_i(G_W)+\frac{\sigma}{n}\right|.$$
Harshitha et al. \cite{harshitha2024} derived several fundamental results connected with the Seidel energy of self-loop graphs, including bounds on $\mathcal{SE}(G_W)$, relations between $\mathcal{SE}(G_W)$ and $\mathcal{SE}(G)$, as well as explicit formulae for the Seidel energy of the complete and complete bipartite graphs with self-loops. In this paper, we extend some of their results. Namely, we give a necessary and sufficient condition for the equality of the Seidel energies of a looped graph and its underlying graph, consider Seidel energy of the union of certain graphs, and show that graph operations complement and Seidel switching preserve Seidel energy in the looped setting.

\section{Preliminaries}

In this section, we will recall some basic notation and fundamental statements which will be used along the paper.

Let $G$ be a simple graph of order $n$, and let $G_W$ be the self-loop graph of $G$, where $W\subseteq V(G)$ and $|W|=\sigma$, $0\leq\sigma\leq n$. 

The concept of the matrix energy was established by Nikiforov \cite{nikiforov}. Let $M$ be a real matrix of order $n\times m$, with singular values $s_i(M)$, $i=1,2,\ldots,q$. The \emph{energy} $\mathcal{E}(M)$ \emph{of the matrix} $M$ is defined as: 
$$\mathcal{E}(M)=\sum\limits_{i=1}^{q} s_i(M),$$ 
where $q\leq\min\{n,m\}$. If $M$ is a real symmetric square matrix of order $n$, its singular values $s_i(M)$, $i=1,2,\ldots,n$, are equal to the absolute values of its eigenvalues. Therefore, we have 
$$\mathcal{SE}(G)=\mathcal{E}(S(G))=\sum\limits_{i=1}^{n} s_i(S(G))=\sum\limits_{i=1}^{n} |\theta_i(G)|,$$
and
$$\mathcal{SE}(G_W)=\mathcal{E}\left(S(G_W)+\frac{\sigma}{n}I_n\right)=\sum\limits_{i=1}^{n} s_i\left(S(G_W)+\frac{\sigma}{n}I_n\right)=\sum\limits_{i=1}^{n} \left|\theta_i(G_W)+\frac{\sigma}{n}\right|,$$
where $I_n$ stands for the identity matrix of order $n$. For the all-ones square matrix of order $n$ we will use the label $J_n$. Given the previous remarks, the following statements are of importance in the subsequent sections of the paper:

\begin{theorem}\cite{fan}\label{tfan}
Let $A, B$ and $C$ be square matrices of order $n$, such that $C=A+B$. Then
$$\sum\limits_{i=1}^{n} s_i(C)\leq \sum\limits_{i=1}^{n} s_i(A) + \sum\limits_{i=1}^{n} s_i(B).$$
Equality holds if and only if there exists an orthogonal matrix $P$, such that $PA$ and $PB$ are both positive semi-definite.
\end{theorem}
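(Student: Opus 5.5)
The inequality is the Ky Fan triangle inequality for the trace (nuclear) norm, and I would prove it through the variational characterization
$$\sum_{i=1}^{n} s_i(M)=\max\{\operatorname{tr}(UM) : U \text{ orthogonal}\}.$$
To establish this characterization I take a singular value decomposition $M=X\Sigma Y^{T}$ with $X,Y$ orthogonal. For any orthogonal $U$,
$$\operatorname{tr}(UM)=\operatorname{tr}\big((Y^{T}UX)\Sigma\big)=\sum_i (Y^{T}UX)_{ii}\, s_i .$$
The diagonal entries of an orthogonal matrix have absolute value at most $1$, so $\operatorname{tr}(UM)\le\sum_i s_i(M)$. Equality is attained at $U=YX^{T}$, since then $UM=Y\Sigma Y^{T}$.

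With this in hand the inequality is immediate. Choose an orthogonal $U$ with $\operatorname{tr}(UC)=\sum_i s_i(C)$. Then
$$\sum_i s_i(C)=\operatorname{tr}(UA)+\operatorname{tr}(UB)\le\sum_i s_i(A)+\sum_i s_i(B).$$

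For the equality case, suppose first that there is an orthogonal $P$ with $PA$ and $PB$ both positive semi-definite. Then $PC$ is also positive semi-definite. The singular values of $PM$ coincide with those of $M$, and for a PSD matrix the trace equals the sum of its eigenvalues, which here equals the sum of its singular values. Hence
$$\sum_i s_i(C)=\operatorname{tr}(PC)=\operatorname{tr}(PA)+\operatorname{tr}(PB)=\sum_i s_i(A)+\sum_i s_i(B).$$

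Conversely, if equality holds, take $P=U$ maximizing $\operatorname{tr}(UC)$. Equality forces $\operatorname{tr}(PA)=\sum_i s_i(A)$ and $\operatorname{tr}(PB)=\sum_i s_i(B)$. It then remains to prove the key lemma: if $\operatorname{tr}(PM)=\sum_i s_i(M)$ for an orthogonal $P$, then $PM$ is positive semi-definite.

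This lemma is the main obstacle. I would argue as follows. Set $N=PM$, which has the same singular values as $M$ and satisfies $\operatorname{tr}N=\sum_i s_i(N)$. Write the polar decomposition $N=QH$ with $Q$ orthogonal and $H=(N^{T}N)^{1/2}$ PSD, and diagonalize $H=Z\Lambda Z^{T}$. Then
$$\operatorname{tr}N=\sum_i (Z^{T}QZ)_{ii}\,\lambda_i .$$
The entries $(Z^{T}QZ)_{ii}$ are at most $1$, and this sum equals $\sum_i\lambda_i$ by assumption. So $(Z^{T}QZ)_{ii}=1$ whenever $\lambda_i>0$. An orthogonal matrix with a diagonal entry equal to $1$ has that entire row and column equal to the corresponding unit vector. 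Therefore $Z^{T}QZ$ acts as the identity on the span of the eigenvectors with $\lambda_i>0$, that is, $Q$ fixes the range of $H$. Consequently $QH=H$, so $N=H$ is positive semi-definite, which gives the stated characterization.
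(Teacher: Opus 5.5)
Your proof is correct. The paper itself gives no proof of this statement: it quotes it from Ky Fan and uses it as a black box, so there is no argument of the authors' to compare with. What you supply is a self-contained proof with three parts:
\begin{itemize}
\item the variational formula $\sum_i s_i(M)=\max_U\operatorname{tr}(UM)$ over orthogonal $U$;
\item subadditivity, obtained by evaluating at a maximizer for $C$;
\item the equality case, via the lemma that $\operatorname{tr}(PM)=\sum_i s_i(M)$ forces $PM$ to be positive semi-definite.
\end{itemize}
Your polar-decomposition argument handles that lemma cleanly: an orthogonal matrix with a diagonal entry $1$ fixes that basis vector, hence $Q$ fixes the range of $H$ and $QH=H$.

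Beyond being self-contained, your route gives two things the bare citation does not.

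First, it makes explicit that ``positive semi-definite'' must include symmetry, as the paper assumes when it concludes that $PE$ ``must be symmetric''. Under the weaker reading $x^TNx\ge 0$, the ``if'' direction fails. Take $A=\begin{pmatrix}1&2\\-2&1\end{pmatrix}$, $B=A^T$, $P=I$. Then $x^TAx=x^TBx=\|x\|^2\ge 0$, yet $\sum_i s_i(A+B)=4$ while $\sum_i s_i(A)+\sum_i s_i(B)=4\sqrt{5}$.

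Second, your proof shows that $P$ can be taken to be \emph{any} maximizer of $\operatorname{tr}(UC)$. If $C=ZDZ^T$ is symmetric, then $P=Z\,\mathrm{diag}(\epsilon_1,\ldots,\epsilon_n)Z^T$ is such a maximizer, where $\epsilon_i=1$ for $d_i\ge 0$ and $\epsilon_i=-1$ for $d_i<0$; indeed $PC=Z|D|Z^T$. This $P$ is a symmetric orthogonal matrix. That is exactly the property the paper later invokes (``we may assume that $P$ is also symmetric''), and the cited statement alone does not supply it, so your argument also justifies that step.
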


\begin{lemma}\cite{hornjohnson}\label{lemmaaux}
Let $A=[a_{ij}]_{n\times n}$ be a positive semi-definite matrix of order $n$ such that $a_{ii}=0$ for some $1\le i\le n$. Then $a_{ij}=a_{ji}=0$ for all $1\le j\le n$.
\end{lemma}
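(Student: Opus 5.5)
The plan is to test the quadratic form of $A$ on vectors supported on just two coordinates. Here positive semi-definite is understood as usual: $A$ is symmetric (Hermitian in the complex case) and $x^{T}Ax\ge 0$ for every $x\in\mathbb{R}^n$. Symmetry immediately gives $a_{ji}=a_{ij}$, so it suffices to prove $a_{ij}=0$ for every $j$. The case $j=i$ is the hypothesis, so fix $j\neq i$.

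Take $x=te_i+e_j$ with $t\in\mathbb{R}$ arbitrary, where $e_k$ are the standard basis vectors. Expanding with symmetry,
$$0\le x^{T}Ax=t^{2}a_{ii}+2t\,a_{ij}+a_{jj}=2t\,a_{ij}+a_{jj},$$
since $a_{ii}=0$. The right-hand side is an affine function of $t$ that is bounded below on all of $\mathbb{R}$. This forces its slope to vanish: if $a_{ij}\neq 0$, choose $t=-(a_{jj}+1)/(2a_{ij})$, which gives the value $-1<0$, a contradiction. Hence $a_{ij}=0$, and by symmetry $a_{ji}=0$.

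An equivalent route is to use that every principal submatrix of a positive semi-definite matrix is positive semi-definite. Then the $2\times 2$ principal minor on rows and columns $i,j$ is nonnegative, i.e. $a_{ii}a_{jj}-a_{ij}^{2}\ge 0$, which gives $-a_{ij}^{2}\ge 0$ and again $a_{ij}=0$.

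There is no real obstacle; the computation is a single line. The only point needing care is the convention. If positive semi-definite meant merely $x^{T}Ax\ge 0$ without symmetry, the same test vector would only give $a_{ij}+a_{ji}=0$ and not the full conclusion. So the proof will state explicitly that positive semi-definite matrices are taken to be symmetric, which is also the reading needed when this lemma is combined with Theorem~\ref{tfan}.
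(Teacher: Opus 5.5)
Your proof is correct. The paper gives no argument of its own for this lemma and only cites Horn and Johnson; your $2\times 2$ principal-minor route is the standard textbook proof, and the test vector $te_i+e_j$ is the same idea written out directly. Your symmetry convention also matches how the paper uses positive semi-definiteness. One small slip: your parenthetical about the Hermitian case is not covered by what you wrote, since real $t$ only forces $\mathrm{Re}\,a_{ij}=0$ and the minor should read $a_{ii}a_{jj}-|a_{ij}|^2$; this is irrelevant here because the paper deals only with real matrices.
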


\begin{lemma}\cite{fiedler}\label{lfiedler}
Let $A$ be a symmetric $m\times m$ matrix with eigenvalues $\alpha_1,\ldots, \alpha_m$, let $u$, $\|u\|=1$, be a unit eigenvector corresponding to $\alpha_1$; let $B$ be a symmetric $n\times n$ matrix with eigenvalues $\beta_1,\ldots, \beta_n$, let $v$, $\|v\|=1$, be a unit eigenvector corresponding to $\beta_1$. 
Then for any $\rho$, the matrix
$$C=\left(
      \begin{array}{cc}
        A & \rho\,uv^T \\
        \rho\,vu^T & B \\
      \end{array}
    \right)
$$
has eigenvalues $\alpha_2,\ldots, \alpha_m,\beta_2,\ldots, \beta_n, \gamma_1, \gamma_2$, where $\gamma_1, \gamma_2$ are eigenvalues of the matrix
$$\widehat{C}=\left(
                \begin{array}{cc}
                  \alpha_1 & \rho \\
                  \rho & \beta_1 \\
                \end{array}
              \right).
$$
\end{lemma}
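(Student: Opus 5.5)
The plan is to build an explicit orthonormal eigenbasis of $C$ that accounts for $m+n-2$ of its eigenvalues, and then to reduce the remaining two-dimensional part to the $2\times 2$ matrix $\widehat{C}$.

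\textbf{Step 1: adapted eigenbases.} Since $A$ is real symmetric and $Au=\alpha_1u$, the subspace $u^{\perp}\subseteq\mathbb{R}^m$ is $A$-invariant. Indeed, for $x\perp u$ we have $u^TAx=(Au)^Tx=\alpha_1u^Tx=0$. The restriction of $A$ to $u^{\perp}$ is again symmetric. By the spectral theorem it therefore has an orthonormal eigenbasis $u_2,\ldots,u_m$, and the corresponding eigenvalues are exactly the remaining eigenvalues $\alpha_2,\ldots,\alpha_m$ of $A$ as a multiset. Set $u_1=u$. In the same way we obtain an orthonormal eigenbasis $v_1=v,v_2,\ldots,v_n$ of $B$ with $Bv_j=\beta_jv_j$. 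This choice is needed because $\alpha_1$ may be a repeated eigenvalue, so an arbitrary eigenbasis need not contain $u$.

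\textbf{Step 2: the easy eigenvectors of $C$.} For $i\ge 2$, consider the vector $\binom{u_i}{0}\in\mathbb{R}^{m+n}$. Then
$$C\binom{u_i}{0}=\binom{Au_i}{\rho\, v\,(u^Tu_i)}=\alpha_i\binom{u_i}{0},$$
because $u^Tu_i=0$. Similarly, for $j\ge 2$ we have $C\binom{0}{v_j}=\beta_j\binom{0}{v_j}$, since $v^Tv_j=0$. These $m+n-2$ vectors are orthonormal, and they contribute the eigenvalues $\alpha_2,\ldots,\alpha_m,\beta_2,\ldots,\beta_n$.

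\textbf{Step 3: the remaining two-dimensional part.} The orthogonal complement of the vectors from Step 2 is spanned by the orthonormal pair $e=\binom{u}{0}$ and $f=\binom{0}{v}$. Because $C$ is symmetric, this complement is invariant under $C$. Direct computation confirms this, using $\|u\|=\|v\|=1$:
$$Ce=\alpha_1e+\rho f,\qquad Cf=\rho e+\beta_1 f.$$
So in the basis $\{e,f\}$, the restriction of $C$ is represented by $\widehat{C}$. Its eigenvalues $\gamma_1,\gamma_2$ are therefore the last two eigenvalues of $C$. Explicitly, if $\widehat{C}\binom{a}{b}=\gamma\binom{a}{b}$, then $ae+bf$ is an eigenvector of $C$ for $\gamma$.

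\textbf{Step 4: conclusion.} Combining Steps 2 and 3 gives a full orthonormal eigenbasis of $C$. Hence the spectrum of $C$ is $\alpha_2,\ldots,\alpha_m,\beta_2,\ldots,\beta_n,\gamma_1,\gamma_2$.

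The only point I expect to need care is Step 1. The other eigenvectors of $A$ must be chosen orthogonal to the prescribed $u$ when $\alpha_1$ is not simple, and this is handled by the invariance of $u^{\perp}$. Everything else is a routine block-matrix computation.
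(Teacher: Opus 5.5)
Your proof is correct and complete. Choosing the orthonormal eigenbases so that they contain the prescribed $u$ and $v$ is the one delicate point, since $\alpha_1$ or $\beta_1$ may be repeated, and your Step 1 handles it properly. Lifting the remaining eigenvectors and identifying the restriction of $C$ to $\mathrm{span}\{e,f\}$ with $\widehat{C}$ then yields a block-diagonal form, which gives the full spectrum with multiplicities.

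The paper does not prove this lemma; it only cites Fiedler. Fiedler's original argument is essentially the same eigenvector construction you give. Like yours, it implicitly takes $\rho$ real, which is what makes $\widehat{C}$ symmetric and your final basis an orthonormal eigenbasis.
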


Let us remind that $G$ is the \emph{$r$-regular graph} if all its vertices are of the same degree equal to $r$. The graph $G \cup H$ means the \emph{disjoint union} of graphs $G$ and $H$. The \emph{join} $G\nabla H$ of disjoint graphs $G$ and $H$ is the graph obtained from $G \cup H$ by joining each vertex of $G$ to each vertex of $H$. The analogous definitions also apply in the case graphs have loops.

\section{A Relationship Between Seidel Energies of a Looped Graph and its Underlying Graph}

Let $G$ be a graph of order $n$, and let $W\subseteq V(G)$, where $|W|=\sigma$. Harshitha et al.~\cite{harshitha2024} have proved that $\mathcal{SE}(G_W) = \mathcal{SE}(G)$ \textit{if} $\sigma = 0$ or $\sigma = n$. In this section, we continue to study the relationship between $\mathcal{SE}(G)$ and $\mathcal{SE}(G_W)$ by considering the converse of the mentioned statement.

\begin{theorem}
Let $G$ be a non-empty graph on $n$ vertices and let $W\subseteq V(G)$ with $|W|=\sigma$,
$0\le\sigma\le n$. Then
\begin{equation}\label{relations}
\mathcal{SE}(G)-2\sigma\!\left(1-\frac{\sigma}{n}\right)
\;\le\;
\mathcal{SE}(G_W)
\;\le\;
\mathcal{SE}(G)+2\sigma\!\left(1-\frac{\sigma}{n}\right),
\end{equation}
where the left-hand side and the right-hand side inequality is attained if and only if $\sigma=0$ or $\sigma=n$.
\end{theorem}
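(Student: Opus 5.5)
We write $S(G_W)+\frac{\sigma}{n}I_n = S(G) + D$, where $D=\frac{\sigma}{n}I_n-\mathfrak{I}_W$ is diagonal. It has $\sigma$ entries equal to $\frac{\sigma}{n}-1$ (positions in $W$) and $n-\sigma$ entries equal to $\frac{\sigma}{n}$. Hence
$$\mathcal{E}(D)=\sigma\Big(1-\frac{\sigma}{n}\Big)+(n-\sigma)\frac{\sigma}{n}=2\sigma\Big(1-\frac{\sigma}{n}\Big).$$
Applying Theorem~\ref{tfan} to $S(G_W)+\frac{\sigma}{n}I_n=S(G)+D$ gives the upper bound in (\ref{relations}). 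Applying it to $S(G)=\big(S(G_W)+\frac{\sigma}{n}I_n\big)+(-D)$, and using $\mathcal{E}(-D)=\mathcal{E}(D)$, gives the lower bound.

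For the equality cases, sufficiency is immediate: if $\sigma=0$ or $\sigma=n$, then $D=0$ and $\mathcal{E}(D)=0$, so both sides coincide with $\mathcal{SE}(G_W)=\mathcal{SE}(G)$. For necessity, suppose $0<\sigma<n$. Then every diagonal entry of $D$ is nonzero, with both signs present. Assume first that the right-hand equality holds. By Theorem~\ref{tfan} there is an orthogonal $P$ with $PS(G)$ and $PD$ both positive semi-definite. Since $D$ is invertible, $P=(PD)D^{-1}$. Because $PD$ is symmetric, we get $PD=DP^T$.

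The key step, and the one I expect to be the main obstacle, is pinning down $P$. I plan to show $P=I$ is forced up to what positive semi-definiteness allows. Indeed, $PD=M$ is symmetric PSD with $M^2=PD^2P^T$, so $M=\sqrt{PD^2P^T}=P|D|P^T$. This gives $PD=P|D|P^T$, i.e.\ $D=|D|P^T$, so $P^T=|D|^{-1}D=\mathrm{sgn}(D)$, a diagonal $\pm1$ matrix. Then $PS(G)=\mathrm{sgn}(D)S(G)$ must be PSD, and in particular symmetric. But $S(G)$ has zero diagonal, so by Lemma~\ref{lemmaaux} a PSD matrix with all diagonal entries zero is the zero matrix. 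This forces $S(G)=0$, which is impossible since $n\ge 2$ makes all off-diagonal entries $\pm1$. (If $n=1$, then $0<\sigma<n$ cannot occur.) Hence equality forces $\sigma\in\{0,n\}$.

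The lower bound is handled the same way. Equality there means there is an orthogonal $P$ with $P\big(S(G_W)+\frac{\sigma}{n}I_n\big)$ and $P(-D)$ both PSD. The same computation gives $P=\mathrm{sgn}(-D)$, and then $\mathrm{sgn}(-D)\big(S(G)+D\big)$ must be PSD. Its diagonal entries are $|D_{ii}|>0$, so Lemma~\ref{lemmaaux} does not apply directly. Instead I will use symmetry: the matrix is $\mathrm{sgn}(-D)S(G)+|D|$, and it must be symmetric. Its off-diagonal entries are $\epsilon_i s_{ij}$, so symmetry requires $\epsilon_i s_{ij}=\epsilon_j s_{ij}$, i.e.\ $\epsilon_i=\epsilon_j$ for all $i\ne j$, since $s_{ij}\neq0$. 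Since both signs occur when $0<\sigma<n$, this is a contradiction. The same symmetry argument also settles the upper case directly, without invoking Lemma~\ref{lemmaaux}.
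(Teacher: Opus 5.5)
Your proof is correct, and it departs from the paper exactly at the step you identified as the obstacle, namely pinning down $P$.

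\textbf{How the paper finds $P$.} After obtaining an orthogonal $P$ with $PS(G)$ and $PE$ positive semi-definite, the paper says that since $S(G)$ is symmetric one ``may assume'' $P$ is symmetric. It then works blockwise. Symmetry of $PE$ together with $P_{12}=P_{21}^T$ kills the off-diagonal blocks. Then $P_{11}^2=I_\sigma$ and $P_{22}^2=I_{n-\sigma}$, together with the definiteness of the diagonal blocks, force $P=\mathrm{diag}(-I_\sigma,I_{n-\sigma})$. The contradiction comes from Lemma~\ref{lemmaaux} applied to the zero-diagonal matrix $PS(G)$.

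\textbf{How you find $P$.} Your route uses only that $D$ is invertible when $0<\sigma<n$. Uniqueness of the positive semi-definite square root gives $PD=P|D|P^T$. (Even more directly, $(PD)^2=(PD)^T(PD)=D^2$ gives $PD=|D|$.) Hence $P=\mathrm{sgn}(D)$. This is the same matrix the paper reaches, but you derive it instead of assuming $P$ symmetric. The paper's stated reason, symmetry of $S(G)$, does not by itself justify that assumption; what actually forces $P$ here is the invertibility of $E$ (uniqueness in the polar decomposition). So your version closes a gap in the published argument.

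\textbf{The final contradiction.} Your symmetry argument, $\epsilon_i s_{ij}=\epsilon_j s_{ij}$ with $s_{ij}\neq 0$, is valid. It relies on the Seidel-specific fact that all off-diagonal entries are nonzero, whereas Lemma~\ref{lemmaaux} works for any nonzero zero-diagonal symmetric matrix.

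\textbf{A small sign slip in the lower-bound case.} There $\mathrm{sgn}(-D)D=-|D|$, so the diagonal of $\mathrm{sgn}(-D)\bigl(S(G)+D\bigr)$ is $-|D_{ii}|<0$, not $+|D_{ii}|$. With the correct sign the matrix fails to be positive semi-definite immediately, which is how the paper finishes. Your symmetry argument does not depend on that sign, so the conclusion stands.
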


\begin{proof}
It holds
\begin{equation}\label{relation1}
S(G_W)+\frac{\sigma}{n}I_n = S(G)+\left(
                                      \begin{array}{cc}
                                        \left(\frac{\sigma}{n}-1\right)I_\sigma & O \\
                                        O & \frac{\sigma}{n}I_{n-\sigma} \\
                                      \end{array}
                                    \right) = S(G)+E,
\end{equation}
where 
$$E=\left(
                                      \begin{array}{cc}
                                        \left(\frac{\sigma}{n}-1\right)I_\sigma & O \\
                                        O & \frac{\sigma}{n}I_{n-\sigma} \\
                                      \end{array}
                                    \right).$$ 
Since $\sum\limits_{i=1}^{n} s_i(E)=\sum\limits_{i=1}^{n} s_i(-E)=2\sigma\left(1-\frac{\sigma}{n}\right)$, by applying Theorem \ref{tfan} to (\ref{relation1}), we obtain the right-hand side inequality in (\ref{relations}). As (\ref{relation1}) is equivalent to
\begin{equation}\label{relation2}
S(G)=S(G_W)+\frac{\sigma}{n}I_n-E,
\end{equation}
by applying Theorem \ref{tfan} to (\ref{relation2}), we obtain the left-hand side inequality in (\ref{relations}).

Let us suppose that the right-hand side equality holds in (\ref{relations}), and that $0<\sigma<n$. According to Theorem \ref{tfan} and the equality (\ref{relation1}), there exists an orthogonal matrix $P$ such that $PS(G)$ and $PE$ are both positive semi-definite. Since $S(G)$ is symmetric, we may assume that $P$ is also symmetric. Let
                                     $$P=\left(
                                               \begin{array}{cc}
                                                 P_{11} & P_{12} \\
                                                 P_{21} & P_{22} \\
                                               \end{array}
                                             \right),
                                    $$
where $P_{11}$ and $P_{22}$ are square matrices of order $\sigma$ and $n-\sigma$, respectively. Since the matrix
$$PE=P
\begin{pmatrix}
\left(\frac{\sigma}{n}-1\right)I_\sigma & O\\
O & \frac{\sigma}{n}I_{n-\sigma}
\end{pmatrix}
=
\begin{pmatrix}
\left(\frac{\sigma}{n}-1\right)P_{11} &
\frac{\sigma}{n}P_{12} \\[4pt]
\left(\frac{\sigma}{n}-1\right)P_{21} &
\frac{\sigma}{n}P_{22}
\end{pmatrix}$$ 
is positive semi-definite, it must be symmetric, and therefore 
\begin{equation}\label{auxiliary}
\frac{\sigma}{n}P_{12}
=
\left(\frac{\sigma}{n}-1\right)P_{21}^{T}.
\end{equation}
On the other hand, since $P$ is symmetric, $P_{12}=P_{21}^{T}$, which, together with (\ref{auxiliary}), gives
$$\frac{\sigma}{n}P_{12}=\left(\frac{\sigma}{n}-1\right)P_{12},$$
wherefrom it follows $P_{12}=O$, and hence $P_{21}=O$. Therefore, $$P=\left(
                                                                       \begin{array}{cc}
                                                                         P_{11} & O \\
                                                                         O & P_{22} \\
                                                                       \end{array}
                                                                     \right)\qquad{\rm and}\qquad PE=\left(
                                                                                                                   \begin{array}{cc}
                                                                                                                     \left(\frac{\sigma}{n}-1\right)P_{11} & O \\
                                                                                                                     O & \frac{\sigma}{n}P_{22} \\
                                                                                                                   \end{array}
                                                                                                                 \right)
.$$

Since $P$ is orthogonal matrix, we have $P_{11}^2=I_\sigma$ and $P_{22}^2=I_{n-\sigma}$. Since, $PE$ is the positive semi-definite matrix and $0<\sigma<n$, $P_{11}$ is negative semi-definite, while $P_{22}$ is positive semi-definite. Let us consider the matrix $P_{22}$. Since it is real and positive semi-definite, it is symmetric, i.e. $P_{22}^T=P_{22}$, and therefore it is diagonalizable. The equation $P_{22}^2=I_{n-\sigma}$ implies the minimal polynomial of $P_{22}$ divides the polynomial of the form $(\lambda-1)(\lambda+1)$, which means that the eigenvalues of $P_{22}$ belong to the set $\{-1,1\}$. Since $P_{22}$ is positive semi-definite, all its eigenvalues must be non-negative, i.e. equal to $1$. Therefore, $P_{22}$ is the identity matrix, i.e. $P_{22}=I_{n-\sigma}$. The similar reasoning can be applied to the matrix $P_{11}$, wherefrom it follows $P_{11}=-I_{\sigma}$. Hence, $$P=\left(
                                                                            \begin{array}{cc}
                                                                              -I_{\sigma} & O \\
                                                                              O & I_{n-\sigma} \\
                                                                            \end{array}
                                                                          \right)
.$$

As $$S(G)=\left(\begin{array}{cc}
                S(G[W]) & M \\
                M^T & S(G[V(G)\setminus W]) \\
                \end{array}
         \right),$$ 
where $G[W]$ and $G[V(G)\setminus W]$ are the subgraphs of $G$ induced by the vertex sets $W$ and $V(G)\setminus W$, respectively, we have         
 
$$PS(G)=\left(\begin{array}{cc}
              -S(G[W]) & -M \\
               M^T & S(G[V(G)\setminus W]) \\
               \end{array}
               \right).$$
Let us notice that the block matrices $-S(G[W])$ and $S(G[V(G)\setminus W])$ have zero diagonal. Since $PS(G)$ is positive semi-definite with zero diagonal, from Lemma \ref{lemmaaux}, it follows $S(G)=O$, which is impossible.

If $\sigma=0$ or $\sigma=n$, the right-hand side equality follows immediately.

Regarding the left-hand side equality, let us suppose it holds in (\ref{relations}), and let $0<\sigma<n$. Taking into account equality (\ref{relation2}) and using Theorem \ref{tfan}, it follows there exists an orthogonal matrix $P$ such that the matrices $P\left(S(G_W)+\frac{\sigma}{n}I_n\right)$ and $P\left(-E\right)$ are both positive semi-definite. As before, let 
$$P=\left(
                                               \begin{array}{cc}
                                                 P_{11} & P_{12} \\
                                                 P_{21} & P_{22} \\
                                               \end{array}
                                             \right).
                                    $$
In a similar way as it is in the case of the proof of the right-hand side inequality, we obtain $P_{12}=P_{21}=O$, and $P_{11}=I_{\sigma}$ and $P_{22}=-I_{n-\sigma}$. Further we have
$$P\left(S(G_W)+\frac{\sigma}{n}I_n\right)=\left(
                                             \begin{array}{cc}
                                               S(G[W])-\left(1-\frac{\sigma}{n}\right)I_{\sigma} & M \\
                                               -M^T & -S(G[V(G)\setminus W])-\frac{\sigma}{n}I_{n-\sigma} \\
                                             \end{array}
                                           \right),
$$
where, as before, $G[W]$ and $G[V(G)\setminus W]$ are the subgraphs of $G$ induced by the vertex sets $W$ and $V(G)\setminus W$, respectively. Since $0<\sigma<n$, all diagonal entries of the matrix $P\left(S(G_W)+\frac{\sigma}{n}I_n\right)$ are negative, which is the contradiction with the assumption that this matrix is positive semi-definite. Therefore, $\sigma=0$ or $\sigma=n$.

If $\sigma=0$ or $\sigma=n$, the left-hand side equality follows immediately.

\end{proof}

\section{Seidel Energy of Looped Graphs with Respect to Complement and Seidel Switching}

It can be easily verified that the Seidel energy of graphs is invariant under Seidel switching and complement. In this section, we will consider these two graph ope\-ra\-ti\-ons in the case of graphs with self-loops, and show that they also preserve the Seidel energy in such a setting.

Depending on how the complement can affect the existence of loops in a graph, there are two natural conventions for defining this graph operation in case of looped graphs. We choose the one that complements loops:

\begin{definition}\label{complement}
Let $G$ be a graph, and let $\emptyset \neq W \subseteq V(G)$. The \emph{complement} $\overline{G_W}$ of the self-loop graph $G_W$ is the self-loop graph on the same vertex set as $G_W$, where any two distinct vertices are adjacent if and only if they are not adjacent in $G_W$, and where a vertex has a loop attached if and only if it does not have a loop in $G_W$.
\end{definition}

Definition \ref{complement} ensures the following identity:
\begin{equation}\label{complementIdentity}
S(\overline{G_W}) = -S(G_W) - I_n.
\end{equation}

\begin{theorem}
Let $G$ be a graph of order $n$, and let $\emptyset \neq W \subseteq V(G)$. Then
$$\mathcal{SE}(\overline{G_W}) = \mathcal{SE}(G_W).$$
\end{theorem}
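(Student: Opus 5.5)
The plan is to use the identity (\ref{complementIdentity}) together with the way the shift $\frac{\sigma}{n}I_n$ enters the definition of $\mathcal{SE}$. The complement $\overline{G_W}$ has loops exactly at the vertices of $V(G)\setminus W$. Hence its number of loops is $\sigma' = n-\sigma$, and by definition
$$\mathcal{SE}(\overline{G_W})=\mathcal{E}\left(S(\overline{G_W})+\frac{n-\sigma}{n}I_n\right).$$

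First we substitute (\ref{complementIdentity}). This gives
$$S(\overline{G_W})+\frac{n-\sigma}{n}I_n=-S(G_W)-I_n+\left(1-\frac{\sigma}{n}\right)I_n=-\left(S(G_W)+\frac{\sigma}{n}I_n\right).$$
Thus the shifted matrix for $\overline{G_W}$ is exactly the negative of the shifted matrix for $G_W$.

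Next, the energy of a matrix is invariant under multiplication by $-1$. The singular values of $-M$ coincide with those of $M$, or, since the matrix is symmetric, its eigenvalues simply change sign. Therefore $\mathcal{E}(-M)=\mathcal{E}(M)$, and we conclude $\mathcal{SE}(\overline{G_W})=\mathcal{SE}(G_W)$.

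The only point needing care is the bookkeeping of the loop count in the complement, namely that it is $n-\sigma$ rather than $\sigma$. This is what makes the constants $-1+\frac{n-\sigma}{n}=-\frac{\sigma}{n}$ combine correctly. The identity (\ref{complementIdentity}) itself follows entrywise: off-diagonal entries flip sign, and each diagonal entry changes between $0$ and $-1$ via $d\mapsto -d-1$. The case $W=V(G)$, where $\overline{G_W}$ has no loops and shift $0$, is covered by the same computation, so no case split is required.
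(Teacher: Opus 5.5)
Your proof is correct and follows essentially the same route as the paper. Both arguments use the identity $S(\overline{G_W})=-S(G_W)-I_n$ and the loop count $\overline{\sigma}=n-\sigma$ to show that the shifted quantities for $\overline{G_W}$ are the negatives of those for $G_W$; the paper states this for eigenvalues, $\theta_i(\overline{G_W})+\frac{\overline{\sigma}}{n}=-\left(\theta_i(G_W)+\frac{\sigma}{n}\right)$, while you state it for the shifted matrices.
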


\begin{proof}
From (\ref{complementIdentity}), it follows $\theta_i(\overline{G_W}) = -\theta_i(G_W) - 1$, for $i = 1, 2, \ldots, n$. So, if $\sigma$ is the number of loops in $G_W$, and $\overline{\sigma}$ in $\overline{G_W}$, then
\[
\mathcal{SE}(\overline{G_W}) = \sum_{i=1}^{n} \left| \theta_i(\overline{G_W}) + \frac{\overline{\sigma}}{n} \right|
= \sum_{i=1}^{n} \left| -\theta_i(G_W) - 1 + 1 - \frac{\sigma}{n} \right|
= \mathcal{SE}(G_W).
\]
\end{proof}

Now, we naturally extend the notion of Seidel switching to graphs with self-loops, ensuring consistency with the above definition of the complement.

\begin{definition}
Let $G$ be an arbitrary graph, and let $\emptyset \neq W, X \subseteq V(G)$. The \emph{looped Seidel switching} with respect to $X$ is the self-loop graph $(G_W)^{X}$ obtained from $G_W$ by complementing all adjacencies (i.e. by changing edges with non-edges, and vice versa) between $X$ and $V(G_W) \setminus X$, while leaving adjacencies inside $X$ and $V(G_W) \setminus X$ unchanged.
\end{definition}

Note that the presence of loops in the graph is not affected by the looped Seidel switching.

\begin{theorem}\label{switching-thm}
Let $G$ be a graph of order $n$, and let $\emptyset \neq W, X \subseteq V(G)$. It holds
$$
spec (S((G_W)^X)) = spec (S(G_W))
\quad\text{and}\quad
\mathcal{SE}((G_W)^X) = \mathcal{SE}(G_W).
$$
\end{theorem}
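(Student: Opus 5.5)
The plan is to show that $S((G_W)^X)$ is orthogonally similar to $S(G_W)$ via a signature matrix. Both spectral claims then follow at once.

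First, order the vertices so that those of $X$ come first, and let
$$D=\left(\begin{array}{cc} -I_{|X|} & O\\ O & I_{n-|X|}\end{array}\right).$$
Then $D$ is diagonal, orthogonal, and satisfies $D=D^T=D^{-1}$. Write $S(G_W)=S(G)-\mathfrak{I}_W$ in block form with respect to the partition $X$, $V(G)\setminus X$:
$$S(G_W)=\left(\begin{array}{cc} S_{11} & N\\ N^T & S_{22}\end{array}\right),$$
where the diagonal blocks carry the entries $-1$ at the looped vertices. Conjugating gives
$$DS(G_W)D=\left(\begin{array}{cc} S_{11} & -N\\ -N^T & S_{22}\end{array}\right).$$

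The main step is to check that this matrix equals $S((G_W)^X)$. Looped Seidel switching complements every adjacency between $X$ and its complement, so each off-diagonal entry $\pm1$ in the block $N$ changes sign. Adjacencies inside $X$ and inside $V(G)\setminus X$ are untouched, so $S_{11}$ and $S_{22}$ keep their off-diagonal entries. The diagonal entries, which are $0$ or $-1$ according to loops, are also unchanged, because switching does not affect loops. Hence $S((G_W)^X)=DS(G_W)D^{-1}$, and the spectra coincide. This entry-by-entry comparison is the only real point to verify; the one subtlety is confirming that the diagonal $-\mathfrak{I}_W$ commutes with $D$, which holds because both are diagonal.

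For the energy, note that $(G_W)^X$ has the same loop set $W$, so $\sigma$ is unchanged. Its eigenvalues are therefore the numbers $\theta_i(G_W)$, and
$$\mathcal{SE}((G_W)^X)=\sum_{i=1}^n\left|\theta_i(G_W)+\frac{\sigma}{n}\right|=\mathcal{SE}(G_W).$$
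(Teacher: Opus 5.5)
Your proposal is correct and follows essentially the same route as the paper: you conjugate $S(G_W)$ by the signature matrix $D_X$ (entries $-1$ on $X$, $1$ elsewhere), which negates exactly the block between $X$ and $V(G)\setminus X$ and leaves the loop diagonal intact, so the spectra coincide, and since $\sigma$ is unchanged the Seidel energies do too. Your explicit remark that the loop count $\sigma$ is preserved, which is needed for the energy equality, is a detail the paper leaves implicit.
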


\begin{proof}
It is a matter of routine to verify that the Seidel matrices of $G_W$ and $(G_W)^X$ are similar. Precisely,
\[
S((G_W)^X) = D_X S(G_W) D_X,
\]
where $D_X = [d_{ij}^X]$ is the diagonal matrix whose diagonal entries are given by
\[
d_{ii}^X =
\begin{cases}
-1, & v_i \in X,\\[2pt]
\;\;1, & v_i \notin X,
\end{cases}
\]
and $D_X^{-1} = D_X = D_X^{\top}$. Hence, the eigenvalues of $S(G_W)$ and $S((G_W)^X)$ coincide, implying equality of the corresponding Seidel energies.
\end{proof}

\section{Seidel Energy of Certain Looped Graphs with Respect to Union and Join}

In this section, we compute the Seidel energy of the disjoint union of a regular graph and its looped copy, and remark that the Seidel energy of the join of these two graphs has the same value.

\begin{theorem}\label{thm:union}
Let $G$ be a $r$-regular graph on $n$ vertices whose eigenvalues with respect to the Seidel matrix are $\theta_i(G)$, $i=1,2,\ldots,n$, and let $\mathcal{U} = G \cup G'$, where $G'$ is a copy of $G$ with a self-loop at each vertex. If $|\theta_i(G)| \ge \tfrac{1}{2}$, for $i = 2,\ldots,n$, then
\[
\mathcal{SE}(\mathcal{U}) = 2\,\mathcal{SE}(G) + 2\!\left( \max\{|\theta_1(G)|,\, R\} - |\theta_1(G)| \right),
\]
where \( R = \sqrt{\,n^2 + \tfrac{1}{4}} \).  
In particular, if \( |\theta_1(G)| \le R \), then
\[
\mathcal{SE}(\mathcal{U}) = 2\sqrt{n^2 + \tfrac{1}{4}} + 2\sum_{i=2}^n |\theta_i(G)|.
\]
\end{theorem}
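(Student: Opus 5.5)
The plan is to write down the Seidel matrix of $\mathcal{U}$ in block form and apply Lemma \ref{lfiedler} to read off its spectrum. Order the $2n$ vertices so that those of $G$ come first. In $\mathcal{U}$ every vertex of $G$ is non-adjacent to every vertex of $G'$, so the off-diagonal blocks are all-ones. The looped copy contributes $S(G)-I_n$, and $\sigma=n$ out of $2n$ vertices carry loops. Hence
$$S(\mathcal{U})=\begin{pmatrix} S(G) & J_n\\ J_n & S(G)-I_n\end{pmatrix},\qquad \mathcal{SE}(\mathcal{U})=\sum_{i=1}^{2n}\left|\theta_i(\mathcal{U})+\tfrac12\right|.$$

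Since $G$ is $r$-regular, the all-ones vector $\mathbf{j}$ is an eigenvector of $S(G)$ with eigenvalue $n-1-2r$. I will read the statement's $\theta_1(G)$ as this eigenvalue; I take it to be the intended labeling and will state this explicitly in the proof. Then $u=v=\mathbf{j}/\sqrt n$ is a unit eigenvector both of $A=S(G)$ (eigenvalue $\theta_1$) and of $B=S(G)-I_n$ (eigenvalue $\theta_1-1$). Moreover $J_n=n\,uv^T$, so Lemma \ref{lfiedler} applies with $\rho=n$. It gives that the spectrum of $S(\mathcal{U})$ consists of three parts:
\begin{itemize}
\item $\theta_2,\ldots,\theta_n$;
\item $\theta_2-1,\ldots,\theta_n-1$;
\item the two eigenvalues of $\begin{pmatrix}\theta_1 & n\\ n & \theta_1-1\end{pmatrix}$.
\end{itemize}

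Next I shift everything by $\tfrac12$. For $i\ge 2$ the pair contributes $|\theta_i+\tfrac12|+|\theta_i-\tfrac12|$. If $|\theta_i|\ge\tfrac12$, then $\theta_i\pm\tfrac12$ have the same sign (or one is zero), so this sum equals $2|\theta_i|$; this is exactly where the hypothesis is used. The shifted $2\times 2$ matrix $\begin{pmatrix}\theta_1+\frac12 & n\\ n & \theta_1-\frac12\end{pmatrix}$ has trace $2\theta_1$ and determinant $\theta_1^2-\tfrac14-n^2$. Its eigenvalues are therefore $\theta_1\pm R$ with $R=\sqrt{n^2+\tfrac14}$, and $|\theta_1+R|+|\theta_1-R|=2\max\{|\theta_1|,R\}$.

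Summing the three parts gives
$$\mathcal{SE}(\mathcal{U})=2\sum_{i=2}^n|\theta_i(G)|+2\max\{|\theta_1(G)|,R\}.$$
Writing $2\sum_{i\ge2}|\theta_i|=2\mathcal{SE}(G)-2|\theta_1|$ yields the main formula. The special case $|\theta_1|\le R$ is immediate because then the maximum equals $R$.

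The computation itself is routine. The only real obstacle I expect is the labeling point: $\theta_1(G)$ must be the eigenvalue belonging to $\mathbf{j}$, which need not be the largest Seidel eigenvalue. I will handle this by fixing that labeling at the outset, since Lemma \ref{lfiedler} works with any eigenvalue whose eigenvector is the chosen $u$, $v$, not only the largest one.
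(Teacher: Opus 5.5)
Your proposal is correct and follows essentially the same route as the paper: the block form of $S(\mathcal{U})$, Fiedler's lemma with $u=v=\frac{1}{\sqrt n}\mathbf{j}$ and $\rho=n$, and the identity $|a+\delta|+|a-\delta|=2\max\{|a|,\delta\}$; the only cosmetic difference is that the paper adds $\frac12 I_{2n}$ before invoking the lemma, taking $A=S(G)+\frac12 I_n$ and $B=S(G)-\frac12 I_n$, rather than shifting afterwards. Your caveat that $\theta_1(G)$ must denote the eigenvalue $n-1-2r$ of $\mathbf{j}$ rather than the largest Seidel eigenvalue is well taken: the paper makes this identification silently, and it is genuinely needed, since for $G=K_n$ with $n\ge 3$ (where $n-1-2r=1-n$ is the smallest eigenvalue) the formula fails under the largest-eigenvalue reading.
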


\begin{proof}
The graph $\mathcal{U}$ has $2n$ vertices and $n$ loops. So, let us denote
$$\mathcal{M}=S(\mathcal{U})+\frac{1}{2}I_{2n} = \left(
                  \begin{array}{cc}
                    S(G) + \frac{1}{2}I_n & J_n \\
                    J_n & S(G) - \frac{1}{2}I_n \\
                  \end{array}
                \right)=\left(
                          \begin{array}{cc}
                            A & J_n \\
                            J_n & B \\
                          \end{array}
                        \right),
$$
where $A=S(G)+\frac{1}{2}I_n$ and $B=S(G)-\frac{1}{2}I_n$. 

Matrices $A$ and $B$ are symmetric and square of order $n$, with constant row sums equal to $n-2r-\frac{1}{2}$ and $n-2r-\frac{3}{2}$, respectively. Let us denote by $\alpha_i(A)$ and $\alpha_i(B)$, $i=1,2,\ldots,n$, the eigenvalues of these matrices, respectively. Therefore, the all-ones vector $\textbf{1}=(\underbrace{1,\ldots,1}_n)^T$ is the eigenvector of $A$ and $B$ corresponding to their eigenvalues $\alpha_1(A)=n-2r-\frac{1}{2}$ and $\alpha_1(B)=n-2r-\frac{3}{2}$, respectively. Let $u=\frac{1}{\sqrt{n}}\,\textbf{1}=\left(\frac{1}{\sqrt{n}},\ldots,\frac{1}{\sqrt{n}}\right)^T$ and $v=\frac{1}{\sqrt{n}}\,\textbf{1}=\left(\frac{1}{\sqrt{n}},\ldots,\frac{1}{\sqrt{n}}\right)^T$ be the unit eigenvectors corresponding to $\alpha_1(A)$ and $\alpha_1(B)$, respectively. Then it holds: $uv^T=\frac{1}{n}\,J_n$ and $vu^T=\frac{1}{n}\,J_n$. So, by calling Lemma \ref{lfiedler} and by setting $\rho$ to be equal to $n$, we obtain the eigenvalues of the matrix $\mathcal{M}$: $\alpha_2(A),\ldots, \alpha_n(A)$, $\alpha_2(B),\ldots, \alpha_n(B)$, and two eigenvalues $\gamma_1$ and $\gamma_2$ which are the eigenvalues of the matrix:
$$\mathcal{\widehat{M}}=\left(
                          \begin{array}{cc}
                            n-2r-\frac{1}{2} & n \\
                            n & n-2r-\frac{3}{2} \\
                          \end{array}
                        \right),
$$
i.e. the roots of the corresponding characteristic polynomial 
$$\phi_{\mathcal{\widehat{M}}}(x)=\left(x-n+2r+\frac{1}{2}\right)\left(x-n+2r+\frac{3}{2}\right)-n^2.$$ 
More precisely, the eigenvalues of the matrix $\mathcal{M}$ are: $n-1-2r\pm\sqrt{n^2+\frac{1}{4}}$, i.e. (see, for example, \cite{bro&hae}) $\theta_1(G)\pm R$, and $\theta_i(G)\pm\frac{1}{2}$, for $i = 2,\ldots,n$. Therefore,
$$\mathcal{SE}(\mathcal{U})=\mathcal{E}(\mathcal{M})=|\theta_1(G) + R| + |\theta_1(G) - R| + \sum_{i=2}^n \Big(|\theta_i(G) + \tfrac12| + |\theta_i(G) - \tfrac12|\Big).$$

Using the identity \( |a + \delta| + |a - \delta| = 2\max\{|a|, \delta\} \), for reals $a$ and $\delta$, and $\delta \ge 0$, we find
\[
\mathcal{SE}(\mathcal{U})
= 2\max\{|\theta_1(G)|, R\} + 2\sum_{i=2}^n\max\{|\theta_i(G)|, \tfrac12\}.
\]
According to the assumption, $|\theta_i(G)| \ge \tfrac12$, for $i = 2,\ldots,n$, which means that $\max\{|\theta_i(G)|, \tfrac12\} = |\theta_i(G)|$, $i = 2,\ldots,n$, and since $\mathcal{SE}(G) = \sum\limits_{i=1}^n |\theta_i(G)|$,
it follows
\[
\mathcal{SE}(\mathcal{U})
= 2\,\mathcal{SE}(G) + 2\big(\max\{|\theta_1(G)|, R\} - |\theta_1(G)|\big).
\]
In particular, if $|\theta_1(G)| \le R$, then $|\theta_1(G)+R| + |\theta_1(G)-R| = 2R$, and hence
\[
\mathcal{SE}(\mathcal{U}) = 2R + 2\sum_{i=2}^n |\theta_i(G)|
= 2\sqrt{n^2 + \tfrac14} + 2\sum_{i=2}^n |\theta_i(G)|.
\]
This completes the proof.
\end{proof}

\begin{remark}
The graph with self-loops $\mathcal{J} = G \nabla G'$ is obtained from the self-loop graph $\mathcal{U} = G \cup G'$ by looped Seidel switching with respect to the vertex set $V(G)$, i.e. $V(G')$. Therefore, according to Theorem \ref{switching-thm}, $\mathcal{U}$ and $\mathcal{J}$ are cospectral with respect to the Seidel matrix, which implies that their Seidel energies are equal.    
\end{remark}

\noindent\textbf{Acknowledgement.} Author I. M. Jovanovi\'c is supported by Project No. H20240855 of the Ministry of Human Resources and Social Security of P. R. China.


\begin{thebibliography}{99}

\bibitem{cesar} C. O. Aguilar, Anti-regular graphs with loops and their spectrum, arXiv:1805.08287 [math.CO], https://doi.org/10.48550/arXiv.1805.08287.

\bibitem{leslie} C. Bozeman, A. Ellsworth, L. Hogben, J. Chin-Hung Lin, G. Maurer, K. Nowak, A. Rodriguez, J. Strickland, Minimum rank of graphs with loops, \emph{Electronic Journal of Linear Algebra}, \textbf{27} (2014) 907--934.

\bibitem{gutman2022} I. Gutman, I. Red\v{z}epovi\'{c}, B. Furtula, A. Sahal, Energy of graphs with self-loops, \emph{MATCH Commun. Math. Comput. Chem.}, \textbf{87} (2022) 645--652.

\bibitem{jovanovic2023} I. M. Jovanovi\'{c}, E. Zogi\'{c}, E. Glogi\'{c}, On the conjecture related to the energy of graphs with self-loops, \emph{MATCH Commun. Math. Comput. Chem.}, \textbf{89} (2) (2023) 479--488.

\bibitem{zagrebloop} S. S. Shetty, A. K. Bhat, On the first Zagreb index of graphs with self-loops, \emph{AKCE Int. J. Graphs Comb.} \textbf{20} (3) (2023) 326--331. 

\bibitem{loopbound1} J. Liu, Y. Chen, D. Dimitrov, J. Chen, New bounds on the energy of graphs with self–loops, \emph{MATCH Commun. Math. Comput. Chem.} \textbf{91} (2024) 779--796. 

\bibitem{seidel} J. H. van Lint, J. J. Seidel, Equilateral point sets in elliptic geometry, \emph{Indagationes Ma\-the\-ma\-ti\-cae}, \textbf{28} (1966) 335--348. 

\bibitem{harshitha2024} A. Harshitha, S. D’ Souza, S. Nayak, I. Gutman, Seidel energy of a graph with self-loops, \emph{Communications in Combinatorics and Optimization} (2024) doi: 10.22049/cco.2024.29576.2062.

\bibitem{Gutman} I. Gutman, The energy of a graph, \emph{Ber. Math. -Statist. Sekt. Forschungsz. Graz}, {\bf 103} (1978) 1--22.  

\bibitem{li2012} X. Li, Y. Shi, I. Gutman, \emph{Graph Energy}, Springer, New York, 2012. 

\bibitem{AkbJovLim} S. Akbari, I. M. Jovanovi\' c, J. Lim, Line graphs and Nordhaus-Gaddum type bounds for self-loop graphs, \emph{Bull. Malays. Math. Sci. Soc.} (2024) 47:117.     

\bibitem{sloop3} S. Akbari, H. Al Menderj, M. H. Ang, J. Lim, Z. C. Ng, Some results on spectrum and energy of graphs with loops, \emph{Bull. Malays. Math. Sci. Soc.} (2023) 46:94. 

\bibitem{anchan2023} D. V. Anchan, S. D’Souza, H. J. Gowtham, P. G. Bhat, Laplacian energy of a graph with self-loops, \emph{MATCH Commun. Math. Comput. Chem.}, \textbf{90} (1) (2023) 247--258.

\bibitem{popat2023} K. M. Popat, K. R. Shingala, Some new results on energy of graphs with self-loops, \emph{J. Math. Chem.}, \textbf{61} (6) (2023) 1462--1469.

\bibitem{popat2024} K. M. Popat, K. R. Shingala, On equienergetic graphs and graph energy of some standard graphs with self loops, \emph{Proyecciones Journal of Mathematics}, \textbf{43} (5) (2024) 1269--1281.

\bibitem{preetha2023} U. P. Preetha, M. Suresh, E. Bonyah, On the spectrum, energy, and Laplacian energy of graphs with self-loops, \emph{Heliyon}, \textbf{9} (7) (2023) e17001.

\bibitem{haemers} W. H. Haemers, Seidel switching and graph energy, \emph{MATCH Commun. Math. Comput. Chem.} \textbf{68} (3) (2012) 653--659.    

\bibitem{akbari2020} S. Akbari, M. Einollahzadeh, M. M. Karkhaneei, M. A. Nematollahi, Proof of a conjecture on the Seidel energy of graphs, \emph{European J. Combin.}, \textbf{86} (2020) 103078.

\bibitem{einollahzadeh2022} M. Einollahzadeh, M. A. Nematollahi, An improved lower bound for the Seidel energy of trees, \emph{Discrete Appl. Math.}, \textbf{320} (2022) 381--386.

\bibitem{oboudi2016} M. R. Oboudi, Energy and Seidel energy of graphs, \emph{MATCH Commun. Math. Comput. Chem.}, \textbf{75} (2016) 291--303.  

\bibitem{oboudi2023} M. R. Oboudi, M. A. Nematollahi, Improving a lower bound for Seidel energy of graphs, \emph{MATCH Commun. Math. Comput. Chem.}, \textbf{89} (2) (2023) 489--502.

\bibitem{popat2019} S. K. Vaidya, K. M. Popat, \emph{Some New Results On Seidel Equienergetic Graphs,} \emph{KYUNGPOOK Math. J.}, \textbf{59} (2019) 335--340. 

\bibitem{nikiforov} V. Nikiforov, The energy of graphs and matrices, \emph{J. Math. Anal. Appl}. \textbf{326} (2007) 1472--1475.  
    
\bibitem{fan} K. Fan, Maximum properties and inequalities for the eigenvalues of completely continuous operators, \emph{Proc. Natl. Acad. Sci. USA} \textbf{37} (1951) 760--766. 

\bibitem{hornjohnson} R. A. Horn, C. R. Johnson, \emph{Matrix Analysis}, Cambridge University Press, Cambridge, 1985.

\bibitem{fiedler} M. Fiedler, Eigenvalues of nonnegative symmetric matrices, \emph{Linear Algebra and its Applications} \textbf{9} (1974) 119--142.
    
\bibitem{bro&hae} A. E. Brouwer, W. H. Haemers, \emph{Spectra of Graphs}, Springer, New York, 2011.  

  



   
    
\end{thebibliography}
\end{document}